\newcommand*\bQ{\mathbb{Q}}
\newcommand*\bZ{\mathbb{Z}}
\newcommand*\bN{\mathbb{N}}
\newcommand*\bR{\mathbb{R}}
\newcommand*\la{\langle}
\newcommand*\ra{\rangle}
\newcommand*\eqdef{\coloneqq}
\newcommand*\divides{\mid}
\newcommand*\ndivides{\nmid}
\newcommand*\Rplus{\protect\hspace{-.1em}\protect\raisebox{.35ex}{\smaller{\smaller\textbf{+}}}}
\newcommand*\Pmod[1]{\mathclose{}\ ({\operatorname{mod}}\ #1)}
\newcommand*\defined[1]{\textsl{#1}}
\newcommand*\Legfrac[2]{\bigl(\!\frac{\,#1\,}{\,#2\,}\!\bigr)}
\newcommand*\nofrac[2]{#1/#2}
\DeclarePairedDelimiterX\set[2]\lbrace\rbrace{\,#1\mathclose{}:\mathopen{}#2\,}
\newtheorem{lemma}{Lemma}
\newtheorem{proposition}[lemma]{Proposition}
\newtheorem{theorem}[lemma]{Theorem}
\newtheorem{conjecture}[lemma]{Conjecture}
\theoremstyle{definition}
\edef\TODAY{{\noexpand\color{blue}\uppercase{\today}}}
\title{Ternary quadratic forms representing a~given~arithmetic~progression}
\def\SA#1#2{Sage}
\def\MA#1#2{Magma}
\begin{document}

\author{Tom\' a\v s Hejda}
\address{Charles University, Faculty of Mathematics and Physics, Department of Algebra, Sokolov\-sk\' a 49/83, 18675 Praha~8, Czech Republic
\newline\indent
University of Chemistry and Technology, Prague, Department of Mathematics, Studentská~6, 16000 Praha~6, Czech Republic}
\email{tohecz@gmail.com}

\author{V\' \i t\v ezslav Kala}
\address{Charles University, Faculty of Mathematics and Physics, Department of Algebra, Sokolov\-sk\' a 49/83, 18675 Praha~8, Czech Republic}
\email{kala@karlin.mff.cuni.cz}

\thanks{\textit{Funding.} This work was supported by the Czech Science Foundation (GA\v CR) [grant numbers 21-00420M, 17-04703Y];
	and Charles University [grant numbers PRIMUS/20/SCI/002, UNCE/SCI/022].}

\keywords{Quadratic form, ternary, universal, diagonal, arithmetic progression}

\subjclass[2010]{11E12, 11E20, 11E25, 11-04, 11Y50, 11Y55}

\begin{abstract}
A positive quadratic form is $(k,\ell)$-universal if it represents all the numbers $kx+\ell$ where $x$ is a non-negative integer,
 and almost $(k,\ell)$-universal if it represents all but finitely many of them.
We prove that for any $k,\ell$ such that $k\nmid\ell$ there exists an almost $(k,\ell)$-universal diagonal ternary form.
We also conjecture that there are only finitely many primes $p$ for which a $(p,\ell)$-universal diagonal ternary form exists (for any $\ell<p$)
 and we show the results of computer experiments that speak in favor of the conjecture.
\end{abstract}

\maketitle

\section{Introduction}

The sum of three squares $x^2+y^2+z^2$ does not represent any integer of the form $8n+7$ and similarly every other positive ternary quadratic form fails to represent some arithmetic sequence.
Conversely, around 1797 Legendre showed that $x^2+y^2+z^2$ represents all positive integers that are not of the form $4^k(8n+7)$ (with $k,n\geq 0$) and there have been numerous results concerning the integers represented by ternary quadratic forms. Before discussing some of them, let us introduce some basic notions.

A \defined{positive ternary quadratic form} is a form $Q(x,y,z)=ax^2+by^2+cz^2+dyz+exz+fxy$, where $a,b,c,d,e,f$ are integers and $Q(x,y,z)>0$ for all real numbers $x,y,z$, not all of them $0$.

For positive integers $k, \ell$ consider the arithmetic sequence
\[
	S_{k, \ell}\eqdef\set{kx+\ell}{x\in\bZ,\, x\geq 0}
.\]
We say that a positive quadratic form with $\bZ$-coefficients is \defined{$(k, \ell)$-universal} if it represents all elements of 
$S_{k, \ell}$ over the ring of integers $\bZ$.
A quadratic form is  \defined{almost $(k, \ell)$-universal} if it represents almost all elements of 
$S_{k, \ell}$, i.e., if there are at most finitely many elements of $S_{k, \ell}$ that are not represented.

Kaplansky~\cite{kaplansky_1995} showed that there are at most 23 ternary forms that represent all odd positive integers (i.e., that are $(2,1)$-universal) and proved the $(2,1)$-universality of 19 of them. Jagy~\cite{jagy_1996} dealt with one of the remaining candidates and, assuming the $(2,1)$-universality of the 3 other forms, 
Rouse~\cite{rouse_2014} proved the $451$-theorem:
A positive quadratic form (of any rank) is $(2,1)$-universal if and only if it represents all the integers $1,3,5,\dots,451$.

Oh~\cite{oh_2011_ijnt} then proved that for any $(k,\ell)$, there are only finitely many equivalence classes of $(k,\ell)$-universal ternaries (that are moreover classical, i.e., $d,e,f$ are all even).

This was followed by investigations of all the $(k,\ell)$-universal diagonal ternary forms for small values of $k$ and $1\leq\ell<k$. Independently, Pehlivan and Wil\-liams~\cite{pehlivan_williams_2018} computed all such possible candidates for $k\leq 11$, and Sun~\cite{sun_2017} found all such candidates with $k\leq 30$.
Pehlivan and Williams established the $(k,\ell)$-universality of a number of their candidates and then Wu and Sun~\cite{wu_sun_2018} proved this for more of these forms.

Notably, when $k=p\geq 11$ is a prime, then no $(k,\ell)$-universal diagonal ternary forms appears in these lists!

$(k,\ell)$-universal forms are intimately connected to regular forms, i.e., quadratic forms that represent over $\mathbb Z$ all the integers that they represent over $\mathbb R$ and the ring $\mathbb Z_p$ of $p$-adic integers for all primes $p$.
Jagy, Kaplansky, and Schiemann~\cite{jagy_kaplansky_shiemann_1997}
proved that there are at most 913~regular ternary forms and established the regularity of all but 22~of them. Oh~\cite{oh_2011_aa}
then proved the regularity of 8 of these, and then Lemke Oliver~\cite{lemke_2014} dealt with the 14~remaining cases under the assumption of Generalized Riemann Hypothesis,
using the method of Ono and Soundrajaran~\cite{ono_soundararajan_1997}.

The problem of precisely determining the set of integers represented by a given ternary quadratic form is still open, although it has been thoroughly studied. Let us mention only the results by 
Kneser~\cite{kneser_1961},
Duke and Schulze-Pillot~\cite{duke_schulze-pillot_1990},
Earnest, Hsia, and Hung~\cite{earnest_hsia_hung_1994}, and point the interested reader to 
the very nice survey by Hanke~\cite{hanke_2004}.

\bigskip

In this short paper, we study $(k,\ell)$-universality of diagonal ternary quadratic forms.
Considering \emph{almost} $(k,\ell)$-universal diagonal ternaries, we show that they always exist.

\begin{theorem}\label{thm:main-almost}
	Let $k, \ell$ be positive integers such that $k\nmid\ell$. Then there is a diagonal ternary positive quadratic form that is almost $(k, \ell)$-universal.	
\end{theorem}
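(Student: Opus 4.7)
The plan is a local-to-global argument. I would construct a diagonal positive ternary form $Q = a x^2 + b y^2 + c z^2$, with $a, b, c$ chosen in dependence on $k$ and $\ell$, so that every sufficiently large $n \equiv \ell \pmod k$ is represented by $Q$ over $\bZ_p$ for every prime $p$. Once this local representability is in place, I would appeal to a classical result on positive ternary quadratic forms (in the spirit of Tartakowski, and more quantitatively of Duke and Schulze-Pillot~\cite{duke_schulze-pillot_1990}), according to which $Q$ represents every sufficiently large locally representable integer outside a finite collection of spinor exceptional square classes.

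For the construction I would proceed prime by prime and combine via the Chinese Remainder Theorem. At each prime $p \nmid 2k$ it suffices to arrange $p \nmid abc$, so that $Q$ is unimodular at $p$ and, having rank $3$, represents every element of $\bZ_p$. At each prime $p \mid 2k$, I would prescribe the $p$-adic Jordan structure of $Q$ --- that is, the valuations $v_p(a), v_p(b), v_p(c)$ and the unit parts of $a, b, c$ modulo a small power of $p$ --- so that the set of values taken by $Q$ on $\bZ_p^3$ covers the entire coset $\ell + p^{v_p(k)} \bZ_p$. For odd $p$, Hensel's lemma together with the universality of a non-degenerate ternary over $\mathbb{F}_p$ makes this routine. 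Combining the prime-local prescriptions yields positive integers $a, b, c$ with the required local representation properties, and positive definiteness is automatic.

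The principal difficulty is the analysis at $p = 2$, where the structure theory of $2$-adic quadratic forms is substantially more intricate and where the hypothesis $k \nmid \ell$ is precisely what prevents an infinite family of $2$-adic obstructions analogous to $n \equiv 7 \pmod 8$ for $x^2 + y^2 + z^2$. A case split according to $v_2(k)$ and $\ell$ modulo a small power of $2$ should furnish a suitable choice of $(v_2(a), v_2(b), v_2(c))$ together with the unit parts. A secondary obstacle --- that a spinor exceptional square class of the constructed $Q$ might intersect $S_{k, \ell}$ infinitely often --- can be neutralized by a mild perturbation of one coefficient by an auxiliary prime chosen via Chebotarev density, so that no spinor obstruction survives on the progression; the Duke--Schulze-Pillot theorem then completes the proof, showing that $Q$ represents all but finitely many $n \in S_{k,\ell}$.
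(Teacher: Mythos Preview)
Your high-level strategy --- verify local representability everywhere and then invoke Duke--Schulze-Pillot --- is the same as the paper's. The execution, however, is quite different, and as written your construction has a real gap.

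The paper does not assemble $Q$ prime-by-prime over all $p\mid 2k$. It first \emph{reduces to the case $k=p$ prime}: writing $d=\gcd(k,\ell)$ one has $S_{k,\ell}=d\,S_{k/d,\,\ell/d}$, and for coprime $k,\ell$ one simply picks a prime $p\mid k$ and uses $S_{k,\ell}\subset S_{p,\ell}$. For each odd prime $p$ and $p\nmid\ell$, the paper then exhibits an explicit form $Q=\la 1,q,p\ra$ (with $q=1$ or a prime $\neq p$) that is anisotropic precisely at $p$ and $\infty$; local representability is checked only at $p$, and because the discriminant $pq$ is squarefree there are \emph{no spinor exceptions at all} --- no Chebotarev perturbation is needed.

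The gap in your direct approach concerns the location of the anisotropic prime. A positive ternary is anisotropic at $\infty$, hence by Hilbert reciprocity at an odd --- in particular nonzero --- number of finite primes. At any finite anisotropic place $r$ the form misses an entire $\bQ_r$-square class, and this class meets $S_{k,\ell}$ infinitely often unless $r\mid k$ \emph{and} $v_r(\ell)<v_r(k)$. Now your sentence ``for odd $p$, Hensel's lemma together with the universality of a non-degenerate ternary over $\mathbb F_p$ makes this routine'' is precisely the statement that $Q$ is unimodular, hence isotropic, at every odd prime; this forces the sole finite anisotropic place to be $2$. That is fatal whenever $k$ is odd, and also for pairs such as $(k,\ell)=(6,4)$: here $v_2(\ell)=2\ge 1=v_2(k)$, so the $2$-adic coset $\ell+2\bZ_2=2\bZ_2$ meets the missed class infinitely often, while the hypothesis $6\nmid 4$ is really witnessed at the prime $3$. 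The correct role of $k\nmid\ell$ is not a $2$-adic phenomenon: it guarantees a prime $p\mid k$ with $v_p(\ell)<v_p(k)$, and \emph{that} is where the anisotropy must sit. Once you see this, the reduction to prime $k$ is the natural move, and the CRT and Chebotarev machinery becomes unnecessary.
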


We prove the theorem in \S\,\ref{sec:almost} by first dealing with almost $(p, \ell)$-universal forms; in fact, we show that for each prime $p$, there is a prime $q$ (or $q=1$) such that the form $x^2+qy^2+pz^2$ is anisotropic precisely at $p$ (and $\infty$), and that this form is then almost $(p, \ell)$-universal. This then quickly implies the theorem for general $k$.

\bigskip

In \S\,\ref{sec:univ} we expand on the observation (based on the results of Pehlivan and Wil\-liams~\cite{pehlivan_williams_2018} and Sun~\cite{sun_2017}) that when $p$ is a prime satisfying $11\leq p\leq 29$, then there is no $(p,\ell)$-universal diagonal ternary form for $1\leq \ell<p$.

We first search for $(p,\ell)$-universal diagonal ternaries and obtain that, for $11\leq p\leq 1237$, the only case when they can exist is $(101, 98)$, when $x^2+2y^2+101z^2$ appears to be $(101, 98)$-universal.

To obtain more refined understanding of the situation, we then consider the number of ``gaps'' of a given form (that satisfies the necessary anisotropy conditions), i.e., of (small) integers that are not represented, see \S\,\ref{sec:exp-ne}. 
Our computations suggest that the number of gaps is always larger than $p\log p$, which provides heuristic argument in favor of the following conjecture (details are discussed in~\S\,\ref{sec:univ}).

\begin{conjecture}\label{conj:fin}
There are only finitely many primes $p$ and $1\leq\ell<p$ possessing a diagonal ternary positive $(p,\ell)$-universal quadratic form.
\end{conjecture}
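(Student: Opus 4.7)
The plan is to attempt a proof by contradiction, combining elementary coefficient-bounding with sharp analytic estimates on ternary theta series. Since the conjecture is supported only by heuristic and computational evidence, a full proof presumably requires substantial analytic input; the pieces below indicate what I think the structure should be.

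Suppose, for contradiction, that $Q=ax^2+by^2+cz^2$ with $a\leq b\leq c$ is $(p,\ell)$-universal for arbitrarily large primes $p$ with $1\leq\ell<p$. The first step is to bound $a,b,c$ polynomially in $p$. Representability of the smallest element $\ell\in S_{p,\ell}$ forces $a\leq\ell<p$ immediately. To bound $b$ and $c$ I would use a volume-packing argument: the number of distinct values of $ax^2+by^2$ in $[1,X]$ is $O(X/\sqrt{ab})$, whereas $\abs{S_{p,\ell}\cap[1,X]}\asymp X/p$. By tracking which small elements of $S_{p,\ell}$ fail to be representable with $z=0$, one hopes to force $b\ll p^2/a$ and to iterate the argument to obtain $abc\ll p^{\alpha}$ for some explicit exponent $\alpha$.

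Next comes local analysis. As in \S\,\ref{sec:almost}, the form $Q$ must represent every element of $S_{p,\ell}$ over $\bZ_q$ for every prime $q$. At $q=p$ this typically forces $Q$ to be anisotropic over $\bQ_p$, making the local density $\alpha_p(Q,n)$ for $n\equiv\ell\Pmod{p}$ of size $O(1/p)$. Passing to a global obstruction via the Eisenstein--cuspidal decomposition $\theta_Q=E+F$, the genus main term is
\[
	r_{\mathrm{gen}}(n)\;\asymp\;\frac{\sqrt{n}}{p\sqrt{abc}}\cdot(\text{bounded local factors}),
\]
while Duke's subconvexity bound yields $\abs{r_{\mathrm{cusp}}(n)}\ll_\varepsilon n^{1/2-\delta+\varepsilon}$ for some $\delta>0$ (unconditionally for squarefree $n$, in general under GRH, as exploited by Lemke Oliver~\cite{lemke_2014}). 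For $Q$ to be $(p,\ell)$-universal we need $r_{\mathrm{gen}}(n)>\abs{r_{\mathrm{cusp}}(n)}$ for \emph{every} $n\in S_{p,\ell}$; combined with the Step~1 polynomial bound on $abc$, this forces the first exception to appear at $n\ll p^{O(1)}$, yielding the desired contradiction provided one can localize this exception inside $S_{p,\ell}$ and control its sign. A quantitative count of such exceptions should then reproduce the $\gtrsim p\log p$ gap bound suggested by the computations in \S\,\ref{sec:exp-ne}.

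The main obstacle is twofold. The packing argument in Step~1 is delicate because the density of integers represented by a binary form varies substantially across residue classes modulo $p$; extracting a sharp polynomial bound on $abc$ likely requires nontrivial input on binary forms (or sums of two squares) in arithmetic progressions. More seriously, current cusp-form bounds for ternary theta series are not sufficiently uniform in the conductor: as $p$ grows, so does the level of the relevant half-integral weight modular form, and the implicit constants in Duke's bound worsen accordingly. Overcoming this is essentially the same level-aspect subconvexity problem that has forced conditional results in the regularity problem for ternaries~\cite{lemke_2014}, and it is very likely where the deepest analytic input would be required.
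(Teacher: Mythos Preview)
The statement is labeled a \emph{Conjecture} in the paper, and indeed the paper offers no proof: the support for it in \S\,\ref{sec:univ} is purely computational (the full search of \S\,\ref{sec:no}, the verification for $\la1,2,101\ra$, the gap counts of \S\,\ref{sec:exp-ne}) together with a probabilistic heuristic. That heuristic models the gaps $X_{Q,p}$ as roughly equidistributed modulo $p$, estimates $\#X_{Q,p}\approx\alpha p\log p$ from data, invokes Oh's discriminant bound $\Delta_Q\ll p^6$ to control the number of candidate forms, and arrives at an expected count $\sum_p p^{6+\varepsilon-\min\alpha}$ that converges once $\min\alpha>7$. The paper explicitly flags that the equidistribution hypothesis fails for the borderline forms $\la1,1,p\ra$, $\la1,2,p\ra$, $\la1,3,p\ra$, which are then handled separately by computation. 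None of this is, or is presented as, a proof.

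Your proposal is therefore not competing with any proof in the paper; it is a sketch of how one might try to turn the conjecture into a theorem, and you are candid about the gaps. A few remarks on the sketch itself. First, the polynomial bound on $abc$ that you aim for via a packing argument is already available unconditionally: Oh's result (cited in the paper as \cite[Theorem 2.3]{oh_2011_ijnt}) gives $\Delta_Q\ll p^6$ for any $(p,\ell)$-universal ternary, so Step~1 can be replaced by a citation. Second, your diagnosis of the real obstruction is accurate: the Eisenstein main term loses a factor of $p$ from the anisotropic local density at $p$ and further factors from $\sqrt{abc}$, while the cusp-form bound from Duke--Iwaniec is not known with the required uniformity in the level $N\asymp\Delta_Q$. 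Making $r_{\mathrm{gen}}(n)>|r_{\mathrm{cusp}}(n)|$ fail for some $n\equiv\ell\Pmod p$ with $n\ll p^{O(1)}$ is exactly a level-aspect subconvexity problem of the type that currently forces GRH even in the much tamer setting of a fixed form (cf.\ \cite{lemke_2014}). So the outline is coherent, but as you yourself note, the missing ingredient is deep and presently out of reach; this is why the paper states the result as a conjecture rather than a theorem.
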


In fact, the data suggest even the stronger conjecture that
the form $x^2+2y^2+101z^2$ was the last missing one and that now the knowledge is exhaustive:

\begin{conjecture}\label{conj:2-101}
The following table gives the complete list of diagonal ternary positive $(p,\ell)$-universal quadratic forms for a prime $p$ and  $1\leq\ell<p$
 (here $\la a,b,c\ra$ stands for the form $ax^2+by^2+cz^{2\!}$):
\[
	\begin{tabular}{*3{>$l<$}}\toprule
	p & \ell & \text{$(p,\ell)$-universal forms} \\
	\midrule[\heavyrulewidth]
	2
	& 1 & \la1,1,2\ra,\, \la1,2,3\ra,\, \la1,2,4\ra \\
	\midrule
	3
	& 1 & \la1,1,3\ra,\, \la1,1,6\ra,\, \la1,3,3\ra,\, \la1,3,9\ra,\, \la1,6,9\ra \\
	& 2 & \la1,1,3\ra,\, \la1,1,6\ra,\, \la2,3,3\ra \\
	\midrule
	5
	& 1 & \la1,2,5\ra,\, \la1,5,10\ra \\
	& 2 & \la1,2,5\ra \\
	& 3 & \la1,2,5\ra \\
	& 4 & \la1,2,5\ra,\, \la1,5,10\ra \\
	\midrule
	7
	& 1 & \la1,2,7\ra,\, \la1,7,14\ra \\
	& 2 & \la1,2,7\ra \\
	& 3 & \la1,2,7\ra \\
	\midrule
	101
	& 98 & \la1,2,101\ra \\
	\bottomrule
	\end{tabular}
\]
\end{conjecture}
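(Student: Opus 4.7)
The statement has two components: (A) each form listed in the table is indeed $(p,\ell)$-universal, and (B) no form outside the table is $(p,\ell)$-universal for any prime $p$ and any $1\leq \ell<p$. For (A), a substantial part of the work is already in the literature: the $(2,1)$ entries follow from the Kaplansky--Rouse program, and the entries with $p\in\{3,5,7\}$ are treated by Pehlivan--Williams and Wu--Sun. The only genuinely new verification is that $\la 1,2,101\ra$ is $(101,98)$-universal. My plan here is to exploit the construction behind Theorem~\ref{thm:main-almost}: first check that $\la 1,2,101\ra$ is anisotropic precisely at $101$ and $\infty$, so by that theorem it is automatically \emph{almost} $(101,98)$-universal. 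It then remains to rule out \emph{any} missing element of $S_{101,98}$; concretely, one computes the spinor genus of $\la 1,2,101\ra$, enumerates its spinor exceptions through the Schulze-Pillot framework, checks that none of them lies in $S_{101,98}$, and confirms with a direct numerical search up to a bound determined by effective estimates on the smallest represented integer in each relevant square class.

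For (B), I would split the argument into a \emph{small $p$} computer-assisted verification and a \emph{large $p$} structural step. The small-$p$ step extends the search already performed by the authors up to $p\leq 1237$. Its input is local analysis at $p$: for a diagonal form $\la a,b,c\ra$ to represent every element of $S_{p,\ell}$ one obtains strong $p$-adic constraints on $a,b,c$ from the solvability of $ax^2+by^2+cz^2\equiv\ell\Pmod{p^r}$, together with positivity, anisotropy conditions at the other primes, and the requirement that the smallest element of $S_{p,\ell}$ be represented. Combined these bound the search space to finitely many candidate triples $(a,b,c)$ for each $(p,\ell)$, and each candidate is then ruled out by exhibiting a missing element of $S_{p,\ell}$.

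The large-$p$ step is by far the main obstacle, and amounts essentially to an effective form of Conjecture~\ref{conj:fin}. The natural strategy is to combine two ingredients: (i) structural information showing that any candidate anisotropic diagonal ternary $\la a,b,c\ra$ satisfying the necessary local conditions for $(p,\ell)$-universality must, for $p$ large enough, possess spinor exceptions and square-class obstructions that intersect $S_{p,\ell}$; and (ii) an effective lower bound, uniform in $(p,\ell)$, on the number of such ``gaps.'' The heuristic of \S\,\ref{sec:exp-ne} predicts a gap count of order $p\log p$, which would easily suffice, but turning this into a rigorous estimate seems to require new analytic input beyond the existing work on Eisenstein-series contributions to ternary theta series and the Duke--Schulze-Pillot and Earnest--Hsia--Hung bounds on exceptional integers. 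Until such an effective statement is in place a full proof of Conjecture~\ref{conj:2-101} looks out of reach, and only the ``exhaustive up to $p\leq 1237$'' portion of~(B) is currently within grasp.
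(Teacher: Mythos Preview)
The statement is a \emph{conjecture}; the paper does not prove it and explicitly says so. Your proposal ultimately reaches the same conclusion for part~(B), so on that point you and the paper agree: the large-$p$ step would require effective control on the gap set $X_{Q,p}$ that is currently unavailable, and only the exhaustive search for $p\leq 1237$ (together with the checks for $\la1,1,p\ra,\la1,2,p\ra,\la1,3,p\ra$ in a larger range) is actually established.

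Where your proposal goes wrong is in part~(A). First, the $p=7$ entries are \emph{not} settled by Pehlivan--Williams or Wu--Sun; the paper states plainly that ``for $p=7,101$ it is only conjectural.'' Second, your plan for $\la1,2,101\ra$ via spinor exceptions and ``effective estimates'' does not work. Since $\det Q=1\cdot2\cdot101=202$ is squarefree, the argument of Proposition~\ref{prop:prime-almost-univ} (via \cite[Ch.~11, Theorem~1.3]{cassels_1978}) already shows there are \emph{no} spinor exceptions at all; enumerating them is vacuous. The obstruction is different: the genus of $\la1,2,101\ra$ has nine classes, so even with trivial spinor obstruction the form can fail to represent finitely many sporadic integers, and the Duke--Schulze-Pillot bound on these is \emph{ineffective} (it rests on Siegel-type lower bounds). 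There is no known ``bound determined by effective estimates'' to which one can push a numerical search and then stop. The paper accordingly only verifies representation on $S_{101,98}\cap[0,10^{12}]$ and leaves the full $(101,98)$-universality open.
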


The universality of the forms for $p=2,3,5$ has been established (see, e.g., \cite{pehlivan_williams_2018}),
 whereas for $p=7,101$ it is only conjectural.

Note that the forms $\la1,2,p\ra$ appear frequently in the preceding table.
This is not an accident, as it seems that these forms are the most likely candidates for $(p,\ell)$-universality 
(see \S\,\ref{sec:exp-ne}, \ref{sec:heur}). In fact, it turns out that the heuristic argument that we use for dealing with the other forms fails in this case! Hence we have to consider  the forms $\la1,2,p\ra$ (together with $\la1,1,p\ra$ and $\la1,3,p\ra$) separately in detail in \S\,\ref{sec:12p}.

The source codes are available at \url{https://github.com/tohecz/ternary}.

It is of course very interesting to consider the existence of $(p,\ell)$-universal ternaries without the restriction that $\ell<p$ and without the diagonality assumption. 
As we have shown that almost $(p,\ell_0)$-universal ternaries always exists, it trivially follows that also $(p,\ell)$-universal ternaries exist once $\ell\equiv\ell_0\Pmod p$ is sufficiently large. Nevertheless, if we set a bound $\ell<Cp$ for a fixed positive integer $C$, our heuristics suggest that there again should be only finitely many $(p,\ell)$-universal diagonal ternaries with $\ell<Cp$. 
We have not done almost any computations with non-diagonal forms. Nevertheless, our (mostly unfounded) guess might be that there are only finitely many $(p,\ell)$-universal \emph{non-diagonal} ternaries  as well.

\section{Existence of almost \texorpdfstring{$(k,\ell)$}{(k,l)}-universal forms}\label{sec:almost}

In the rest of the article we will consider only diagonal ternary positive forms, i.e., quadratic forms $Q(x,y,z)=ax^2+by^2+cz^2=:\langle a,b,c\rangle$, where $a,b,c$ are positive integers.
We denote $d_Q=abc$ the determinant of $Q$ (note that the determinant of a ternary form is sometimes defined as $8abc$; in our definition we are following \cite{cassels_1978}).

Let $\nu$ be a place of $\bQ$. 
A quadratic form $Q(x,y,z)$ is \defined{isotropic at $\nu$} if it non-trivially represents $0$ over the completion $\bQ_\nu$, i.e., if $Q(x,y,z)=0$ for some $x,y,z\in\bQ_\nu$, not all of them $0$. Otherwise $Q$ is \defined{anisotropic at $\nu$}.
Note that if $\nu=p$ is a finite place corresponding to a prime $p$, then $Q=\la a,b,c\ra$ can be anisotropic at $p$ only if $p\mid 2abc$.

A positive ternary form is always anisotropic at $\infty$ and, by Hilbert reciprocity law, it is anisotropic at an odd number of finite places $\nu$.

\begin{proposition}\label{prop:anisotropic}
	Let $p$ be a prime. Then there is a positive diagonal ternary form $Q=\la 1,q,p\ra$ that is anisotropic precisely at $p$ and $\infty$, where $q=1$ or a prime different from $p$.	
\end{proposition}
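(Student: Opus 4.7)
The plan is to translate the question into Hilbert-symbol language. For the form $\langle 1, q, p\rangle$, isotropy over $\bQ_v$ is equivalent to $\langle 1, q\rangle$ representing $-p$, which by the standard criterion for ternary forms becomes the single condition $(-q, -p)_v = +1$. Hence anisotropy at $v$ is equivalent to $(-q, -p)_v = -1$. At $v = \infty$ this symbol is automatically $-1$ since both arguments are negative, and at any odd prime $\ell\notin\{p,q\}$ both $-q$ and $-p$ are units in $\bZ_\ell^*$, so the symbol is $+1$. Thus the only places where anything nontrivial can happen are $\infty$, $2$, $p$, and $q$, and the behaviour at $\infty$ and at primes outside $\{2,p,q\}$ is already as desired.

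The crucial simplification is Hilbert reciprocity, $\prod_v (-q,-p)_v = 1$. If I can arrange
\[
	(-q,-p)_p = -1 \qtext{and} (-q,-p)_2 = +1,
\]
then combined with $(-q,-p)_\infty=-1$ and the trivial symbols at all other primes, reciprocity forces $(-q,-p)_q = +1$ for free. In other words, isotropy at the auxiliary prime $q$ does not have to be checked separately; this is the main economy of the argument.

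Now I would translate the two remaining conditions into explicit congruences on $q$. For $p$ odd and $q$ a unit mod $p$, the formula for the tame symbol gives $(-q,-p)_p = \Legfrac{-q}{p}$, so the $p$-adic condition reads $\Legfrac{-q}{p} = -1$. The formula $(a,b)_2 = (-1)^{\epsilon(a)\epsilon(b)}$ on odd units shows $(-q,-p)_2 = +1$ unless $p\equiv q\equiv 1\Pmod4$, so the $2$-adic condition is only a mild restriction modulo $4$. I would then split into three cases: (i) $p=2$, where a direct check verifies that $q=1$ works, giving $\langle 1,1,2\rangle$; (ii) $p \equiv 3\Pmod 4$, where the $2$-adic condition is automatic and $q=1$ suffices since $\Legfrac{-1}{p}=-1$; (iii) $p\equiv 1\Pmod 4$, where $q=1$ fails and I must produce a prime $q \equiv 3\Pmod 4$ with $\Legfrac{q}{p} = -1$ (which, since $\Legfrac{-1}{p}=+1$ here, is equivalent to $\Legfrac{-q}{p}=-1$).

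The last case is where Dirichlet's theorem on primes in arithmetic progressions enters. The Legendre condition $\Legfrac{q}{p}=-1$ cuts out a union of $(p-1)/2$ residue classes modulo $p$, and the mod-$4$ condition is independent of it, so their intersection is a non-empty union of classes modulo $4p$; Dirichlet then provides infinitely many primes in that union, so in particular one with $q \ne p$. The main potential obstacle I see is the bookkeeping of the $2$-adic Hilbert symbol — notably one must be sure to keep $q$ odd, since a term $(a,2)_2 = (-1)^{(a^2-1)/8}$ would enter otherwise — but the case analysis above always allows $q \in \{1\} \cup \{\text{odd primes}\}$, so this complication is sidestepped entirely.
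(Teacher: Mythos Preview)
Your argument is correct and follows essentially the same route as the paper: reduce via Hilbert reciprocity to checking anisotropy at $p$ (and isotropy at $2$), then handle the residue classes of $p$ separately, in the hard case choosing a prime $q\equiv 3\Pmod 4$ with $\Legfrac{q}{p}=-1$. The only cosmetic differences are that the paper splits cases modulo~$8$ rather than~$4$ (taking $q=2$ for $p\equiv 5,7\Pmod 8$, so that Dirichlet is needed only for $p\equiv 1\Pmod 8$) and verifies anisotropy at~$p$ by a direct $p$-divisibility descent instead of the Hilbert-symbol formula.
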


\begin{proof}
	For $p=2$ we can take $Q=\la 1,1,2\ra$, so let us assume that $p$ is an odd prime. We will distinguish several cases according to the value $p\Pmod 8$.
	
	Case $p\equiv 3\Pmod 4$. Then we claim that $Q=\la 1,1,p\ra$ works. The only candidates for primes at which $Q$ can be anisotropic are $2$ and $p$. By Hilbert reciprocity, it suffices to show that $Q$ is anisotropic at $p$. Assume that $x^2+y^2+pz^2=0$ with $x,y,z\in\bZ_p$. Then $x^2+y^2\equiv 0\Pmod p$, and so $x\equiv y\equiv 0\Pmod p$, because $-1$ is a quadratic non-residue modulo $p$ in this case. But then also $z\equiv 0\Pmod p$ and we have 
	$(\nofrac xp)^2+(\nofrac yp)^2+p(\nofrac zp)^2=0$. Continuing in this way, we get that $x=y=z=0$, i.e., that $Q$ is anisotropic at $p$.
	
	Case $p\equiv 5,7\Pmod 8$. In this case the form $Q=\la 1,2,p\ra$ works, which can be proved by the same argument as in the previous paragraph.
	
	Case $p\equiv 1\Pmod 8$. We will show that the form  $Q=\la 1,q,p \ra$ works if $q$ is a prime such that $q\equiv 3\Pmod 4$ and the Legendre symbol $\Legfrac qp =-1$ (such primes $q$ clearly exist).	
	For this form $Q$, the anisotropic candidates are $2$, $p$, and~$q$. Distinguishing the two possibilities for $q\Pmod 8$, it is easy to verify that the form $Q$ is always isotropic at 2. Hence it suffices to show that $Q$ is anisotropic at $p$.	
	As $p\equiv 1\Pmod 8$, $-1$ is a quadratic residue modulo $p$, and so $\Legfrac {-q}p =-1$. Hence $x^2+qy^2\equiv 0\Pmod p$ implies that $x\equiv y\equiv 0\Pmod p$ and we see as before that $Q$ is indeed anisotropic at $p$.	
\end{proof}

\begin{proposition}\label{prop:prime-almost-univ}
	Let $a,b,v$ be positive integers and $p$ an odd prime such that $abv$ is squarefree and $p\nmid abv$. Let $Q=\la a,b,vp\ra$ be a positive diagonal ternary form that is anisotropic precisely at $p$ and $\infty$. Then $Q$ is almost $(p,\ell)$-universal for every positive integer $\ell$ such that $p\nmid\ell$.
\end{proposition}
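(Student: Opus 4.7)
\noindent\emph{Proof plan.}
The plan is to argue locally-globally: first I would verify that $Q$ represents $n$ over every completion $\bQ_v$ of $\bQ$ for every $n\in S_{p,\ell}$, and then upgrade that local representability to an integral representation of all sufficiently large such $n$.

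Most places in the local step should be routine. At the real place, positivity of $Q$ combined with $n>0$ suffices. At any prime $\ell\nmid 2pq$, the form $Q$ is unimodular and its reduction is a non-degenerate ternary form over $\bF_\ell$ that represents every residue, so Hensel's lemma lifts this to $\bZ_\ell$. At $\ell\in\{2,q\}$, the isotropy of $Q$ (part of the hypothesis) together with the Jordan decomposition at $\ell$ should yield integral representations of $n$ for all $n$ outside at most finitely many exceptional square classes, contributing only finitely many members to $S_{p,\ell}$. The decisive place will be $p$: since $p\nmid\ell$, every $n\in S_{p,\ell}$ is a $p$-adic unit. A quick re-reading of the three cases in the proof of Proposition~\ref{prop:anisotropic} shows that $-q$ is a quadratic non-residue modulo $p$ in every case, so the binary form $x^2+qy^2$ is the norm form of $\bF_{p^2}/\bF_p$ and takes every non-zero value modulo $p$. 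Any such solution then lifts to $\bZ_p$ by Hensel's lemma (the gradient of $x^2+qy^2$ does not vanish at a non-trivial residue solution), producing a $\bZ_p$-representation of $n$ by $Q$.

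With local representability secured everywhere, I would then invoke a Duke-Schulze-Pillot-type result~\cite{duke_schulze-pillot_1990} for positive ternary quadratic forms to conclude that for $n$ sufficiently large, representability of $n$ by the spinor genus of $Q$ implies representability by $Q$ itself. The hard part will be bridging local representability everywhere -- i.e., representability by the \emph{genus} -- to representability by the spinor genus: for a general positive ternary form the two can differ on an infinite ``spinor exceptional'' set of integers. Here one should exploit the very restrictive anisotropy pattern of $Q=\la 1,q,p\ra$ (anisotropic only at the two places $\{p,\infty\}$) to tightly constrain the spinor genera inside the genus and rule out any infinite spinor-exceptional sub-progression of $S_{p,\ell}$. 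The remaining sporadic exceptions, together with the square-class exceptions at $\ell\in\{2,q\}$ from the local step, would then form the finite exceptional set that produces the ``almost'' in the statement.
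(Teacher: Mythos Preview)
Your overall strategy---check local representability everywhere and then invoke Duke--Schulze-Pillot~\cite{duke_schulze-pillot_1990}---is exactly the route the paper takes. Two points in your execution need correction, one minor and one substantial.

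\smallskip

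\textbf{Minor (local step at the isotropic primes).} You hedge that at $\ell\in\{2,q\}$ there might be ``finitely many exceptional square classes, contributing only finitely many members to $S_{p,\ell}$.'' This is confused on two counts. First, an isotropic ternary $\bZ_\ell$-lattice represents \emph{all} of $\bZ_\ell$; there are no locally excepted integers at an isotropic place, so no hedging is needed. Second, were there a locally excepted square class, its intersection with the arithmetic progression $S_{p,\ell}$ would typically be \emph{infinite}, not finite, so the ``contributing only finitely many members'' clause would not save you. Simply state: at the isotropic places the form is locally universal, full stop. (Your argument at $p$ is fine; note that the hypothesis ``anisotropic at $p$'' with $p\nmid q$ already forces $-q$ to be a non-residue mod $p$, so you need not refer back to the specific constructions in Proposition~\ref{prop:anisotropic}.)

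\smallskip

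\textbf{Substantial (spinor exceptions).} This is the real gap. You write that you will ``exploit the very restrictive anisotropy pattern of $Q$\dots\ to tightly constrain the spinor genera\dots\ and rule out any infinite spinor-exceptional sub-progression.'' That is not an argument; it is a hope. Anisotropy at $\{p,\infty\}$ by itself does not control the spinor genus structure. The paper's key observation, which you are missing, is much more concrete: since $q$ is $1$ or a prime distinct from $p$, the determinant $\det Q=pq$ is \emph{squarefree}. By a classical result (see \cite[Ch.~11, Theorem~1.3]{cassels_1978}), a positive ternary form with squarefree determinant has its genus equal to a single spinor genus, so there are \emph{no} spinor exceptions at all. (Alternatively one can cite the explicit criteria of Earnest--Hsia--Hung~\cite{earnest_hsia_hung_1994}.) With that in hand, Duke--Schulze-Pillot gives that every sufficiently large $n$ locally represented by $Q$ is globally represented, and you are done.
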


\begin{proof}
	We will use a theorem of Duke and Schulze-Pillot~\cite{duke_schulze-pillot_1990}, cf.~\cite[Theorem on p.~11]{hanke_2004}. For any undefined notions in the proof, see, e.g.,~\cite{hanke_2004}.
	
	We are interested in almost $(p,\ell)$-universality and we have $p\nmid\ell$, so we need to show that $Q$ locally represents all elements of the corresponding arithmetic progression and that there are no spinor exceptions.
	
	The local representation is no problem at $\bR$ and at the isotropic places, so we need to check it only at the anisotropic place $p$. It suffices to show that the binary form $ax^2+by^2$ with $p\nmid q$ represents all non-zero classes modulo $p$. As $a$ is invertible modulo $p$, let $e\equiv ba^{-1}\Pmod p$; it then suffices to show that $x^2+ey^2$ represents all non-zero classes modulo $p$.
	
	If $e$ is a quadratic non-residue modulo $p$, then $ey^2$ represents all the non-residues and $x^2$ represents all the residues. If $e$ is a quadratic residue, then $x^2+ey^2$ represents the same elements modulo $p$ as $x^2+y^2$. But the latter form represents (over $\bZ$) all the primes $\equiv 1\Pmod 4$, which cover all the non-zero classes modulo $p$.
	
	Let us now consider the spinor exceptions, i.e., integers $u$ such that $Q$ does not represent the values of the quadratic sequence $ux^2$ for integers $x$; there are always only finitely many spinor exceptions (for an overview of their properties, see~\cite[pp.~351--352]{schulze-pillot_2000}).
	In particular, a spinor exception can exist only if the genus of $Q$ breaks into an even number of spinor genera. However, a necessary condition for this to happen is that the determinant of $Q$ is not squarefree~\cite[Ch.~11, Theorem~1.3]{cassels_1978}, whereas in our case, the determinant $abvp$ is squarefree.
	Alternatively, one can deduce the non-existence of spinor exceptions from the explicit results of Earnest, Hsia, and Hung~\cite{earnest_hsia_hung_1994}.
\end{proof}

We are now ready to prove Theorem~\ref{thm:main-almost}.

\begin{proof}[Proof of Theorem~\ref{thm:main-almost}] When $k=p=2$, then $\la 1,1,2\ra$ is $(2,1)$-universal.
	When $k=p$ is an odd prime, the theorem was proved  in Proposition \ref{prop:prime-almost-univ} for the form $\la 1,q,p\ra$ from Proposition \ref{prop:anisotropic}.
	
	If $k$ and $\ell$ are coprime, then there exists a prime $p$ such that $p\mid k$ and $p\nmid\ell$. Then $S_{k,\ell}\subset S_{p,\ell}$, and so every almost $(p,\ell)$-universal form is also almost $(k,\ell)$-universal.
	
	Finally, if $d=\gcd(k,\ell)$, then let $Q$ be an almost $(k/d,\ell/d)$-universal ternary form (which exists by the previous paragraph). Then $dQ$ is almost $(k,\ell)$-universal.
\end{proof}

\section{Non-existence of \texorpdfstring{$(p,\ell)$}{(p,l)}-universal forms}\label{sec:univ}

The reasoning behind Conjectures~\ref{conj:fin} and \ref{conj:2-101} is based on several observations from numerical experiments.
Prior to stating the observations, let us denote $X_{Q,p}$ the set of non-represented numbers (we call them \emph{gaps}) for a ternary form~$Q$ that is anisotropic precisely at $p$ (and $\infty$):
\[
	X_{Q,p} \eqdef \set{n\in \bN}{n\not\equiv 0\Pmod p,\,n\text{ not represented by }Q}
.\]
Note that when the determinant $d_Q$ is squarefree and $Q$ is anisotropic precisely at an odd prime $p$ and $\infty$, then $X_{Q,p}$ is finite by Proposition \ref{prop:prime-almost-univ}.

\bigskip

We carried out the following computations.

\subsection{Full search for \texorpdfstring{$(p,\ell)$}{(p,l)}-universal forms}\label{sec:no}

For a specific odd prime $p$ and $0<\ell<p$, there is an easy algorithm that searches for $(p,\ell)$-universal diagonal ternary forms.
Suppose that such a form exists.
Then there certainly exists a $(p,\ell)$-universal form $Q=\la a,b,c\ra$ such that:
\begin{itemize}
\item all three coefficients are squarefree (for instance, if $a=a'd^2$, then $\la a',b,c\ra$ is also $(p,\ell)$-universal);
\item $a\leq b\leq c$;
\item $a\leq \ell$ (one of the coefficients must be less than $\ell$ as the form represents $\ell$);
\item $\la a,b\ra$ represents $\ell$ (suppose it does not; then $c\leq\ell<p$ and this is a contradiction with $p\divides abc$);
\item if $\ell$ is not squarefree, then $a\leq\ell/2$ (either $\ell=ax^2$ and as $\ell$ is not squarefree and $a$ is, $a<\ell$, $x\geq2$, whence $a\leq\ell/4$; or $\ell=by^2$ and $a\leq b\leq\ell/4$; or $\ell=ax^2+by^2$, whence $2a\leq a+b\leq\ell$);
\item $p\divides b$ or $p\divides c$ (the prime divides the determinant $abc$ and as $a\leq\ell < p$, $p\ndivides a$).
\end{itemize}
Moreover, we know that no unary or binary $(p,\ell)$-universal forms exist.
So if we denote $e_{q}$ the smallest number $\equiv \ell\Pmod p$ not represented by a quadratic form $q$, we know that $b\leq e_{\la a\ra}$ and $c\leq e_{\la a,b\ra}$.

Based on these conditions, we searched through all the possible triples $(a,b,c)$ and obtained:

\begin{proposition}
For all primes $11\leq p\leq 1237$ and $1\leq \ell\leq p-1$ such that $(p,\ell)\neq(101,98)$, there are no diagonal ternary positive $(p,\ell)$-universal quadratic forms.
\end{proposition}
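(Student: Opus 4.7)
The statement is a computer-verification claim, and my plan is to convert the bullet-list reductions into a bounded enumeration and then eliminate each surviving candidate by exhibiting a specific non-represented element of $S_{p,\ell}$.

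First I would establish the six reductions rigorously. Squarefreeness of the coefficients comes from the substitution $x\mapsto dx$ when $a=a\pr d^2$. The ordering $a\le b\le c$ is merely a relabelling. The bound $a\le\ell$ follows because some coefficient in any representation $\ell=ax^2+by^2+cz^2$ must be at most $\ell$, and the minimum coefficient is $a$. The $\la a,b\ra$-representability of $\ell$ holds because otherwise $z\neq 0$ in every representation of $\ell$, forcing $c\le\ell<p$ and contradicting $p\divides abc$. The refinement $a\le\ell/2$ for non-squarefree $\ell$ is the short case analysis on which variables are nonzero in a binary representation of $\ell$ by $\la a,b\ra$. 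Finally $p\divides bc$ follows from $p\divides abc$ together with $a\le\ell<p$, the divisibility $p\divides abc$ itself being the anisotropy-at-$p$ condition that a $(p,\ell)$-universal form with $p\nmid\ell$ must satisfy.

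Next, the enumeration is nested: for each $a$ from $1$ to $\ell$ (or $\ell/2$ in the non-squarefree case), compute the smallest non-represented element $e_{\la a\ra}\in S_{p,\ell}$ and loop $b$ from $a$ to $e_{\la a\ra}$; for each pair $(a,b)$ compute $e_{\la a,b\ra}$ and loop $c$ from $b$ to $e_{\la a,b\ra}$ with the restriction $p\divides bc$. For each resulting triple $Q=\la a,b,c\ra$, the non-universality test is to search for a witness $n\in S_{p,\ell}$ with $n\le N$ not of the form $ax^2+by^2+cz^2$, which is a finite check by looping $x,y,z$ in the box $\abs*{x}\le\sqrt{N/a}$, $\abs*{y}\le\sqrt{N/b}$, $\abs*{z}\le\sqrt{N/c}$ and marking which $n\le N$ in $S_{p,\ell}$ are hit. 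Any missed $n$ is a certificate that $Q$ is not $(p,\ell)$-universal, and the candidate is discarded.

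The main obstacle is calibrating $N$ large enough to catch every non-universal form: since $X_{Q,p}$ is finite (because $\Delta_Q$ is squarefree) but not a priori bounded by an explicit function of $(a,b,c)$, a fully rigorous version of the argument would invoke an effective form of the Duke--Schulze-Pillot theorem to bound $\max X_{Q,p}$ in terms of $\Delta_Q$. In practice the gaps are small and a moderate cutoff suffices to eliminate every triple in the stated range except $\la 1,2,101\ra$ at $(p,\ell)=(101,98)$, which survives every test and is therefore excluded from the statement and recorded in Conjecture~\ref{conj:2-101}. A secondary engineering concern is that there are on the order of $\pi(1237)\cdot p$ pairs $(p,\ell)$ to process, so the inner enumeration must be kept tight, which the iterative bounds $e_{\la a\ra}$ and $e_{\la a,b\ra}$ accomplish by reducing the typical candidate count per $(p,\ell)$ to a small handful.
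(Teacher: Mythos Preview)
Your proposal is correct and follows the paper's approach exactly: the same six reductions, the same nested enumeration bounded by $e_{\la a\ra}$ and $e_{\la a,b\ra}$, and elimination of each surviving candidate by an explicit non-represented witness in $S_{p,\ell}$. One clarification: no effective Duke--Schulze-Pillot bound is needed for rigor, since exhibiting a single gap already certifies non-$(p,\ell)$-universality --- the cutoff $N$ affects only termination, not soundness, and the proof is complete the moment every candidate (other than $\la 1,2,101\ra$ at $(101,98)$) has been eliminated; also note that squarefree coefficients do not force $\Delta_Q=abc$ to be squarefree (e.g.\ $\la 2,3,6\ra$), though the algorithm does not rely on this.
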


We carried out this computation in Python 2.7.12 on Intel Xeon machines with Ubuntu 16.04, kernel version 4.13.0.
This computation took 670 CPU days to complete.
We precomputed a list of primes and a list of squarefree numbers in SageMath~6.4~\cite{sagemath} as Python natively does not support these.

\subsection{\texorpdfstring{$(101,98)$}{(101,98)}-universality of \texorpdfstring{$\la1,2,101\ra$}{<1,2,101>}}

We did not manage to prove that the form $Q=\la1,2,101\ra$ represents all elements of $S_{101,98}$,
 as the form is not regular (the size of the genus is 9 as computed by Magma~\cite{magma}).
We assert the following:

\begin{proposition}
The form $\la1,2,101\ra$ represents $S_{101,98}\cap[0,10^{12}]$.
\end{proposition}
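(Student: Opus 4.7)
The proposition is a finite computational assertion: for each of the roughly $10^{10}$ integers $n\in S_{101,98}\cap[0,10^{12}]$, one must exhibit a triple $(x,y,z)\in\bZ^3$ with $n=x^2+2y^2+101z^2$. My plan is a direct verification by computer, reducing the ternary representability question to a quick check on the binary form $x^2+2y^2$.

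The key reduction uses the classical genus-1 characterization: a nonnegative integer $m$ is represented by $x^2+2y^2$ if and only if every prime $p\equiv 5,7\Pmod 8$ appears to an even exponent in the factorization of~$m$. Equivalently, after stripping any even powers of such primes, the remaining part must contain only the prime $2$ and primes $\equiv 1,3\Pmod 8$. With this criterion in hand, for each target $n$ the plan is to iterate $z=0,1,2,\dotsc,\qfl*{\sqrt{n/101}}$ and, for each~$z$, factor $m\eqdef n-101z^2\leq 10^{12}$ by trial division up to $10^6$ and apply the criterion. As soon as some $z$ works, $n$ is confirmed represented and one moves on.

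For efficiency one precomputes the primes below $10^6$ and a table of their classes mod $8$, and processes the arithmetic progression in contiguous chunks on many cores. Since $n<2^{40}$, all intermediate values fit comfortably in 64-bit arithmetic. Heuristically the density of $m$ represented by $x^2+2y^2$ is positive, so the expected smallest successful~$z$ is~$O(1)$ and the average cost per~$n$ is a handful of factorizations of numbers $\leq 10^{12}$; the total workload is large but well within reach. An alternative cross-check, running as a second independent pass, is a sieve: enumerate triples $(x,y,z)$ with $Q(x,y,z)\leq 10^{12}$ and $Q(x,y,z)\equiv 98\Pmod{101}$ (restricting $(x,y)$ to the $\approx 100$ residue classes mod~$101$ with $x^2+2y^2\equiv 98\Pmod{101}$), and mark a bit array indexed by $(n-98)/101$; then confirm that no bit remains unset.

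The main obstacle is engineering rather than mathematical: managing the $\sim\!10^{10}$ target values within reasonable wall-clock time, and ruling out implementation bugs. For correctness, the two independent strategies above should agree on every~$n$; additionally, whenever the factorization criterion certifies that some $m=n-101z^2$ is representable, one can explicitly produce $(x,y)$ with $x^2+2y^2=m$ by descending through the prime factorization (using the identity $(a^2+2b^2)(c^2+2d^2)=(ac\pm 2bd)^2+2(ad\mp bc)^2$ and Cornacchia's algorithm at each prime), thereby delivering an actual witness triple rather than an existence certificate.
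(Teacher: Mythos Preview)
Your proposal is correct and in fact more elaborate than the paper's own account. The paper simply reports a direct sieve: iterate over triples $(x,y,z)$ with $x^2+2y^2\leq 10^{12}$, $x^2+2y^2\equiv 98\Pmod{101}$, and $x^2+2y^2+101z^2\leq 10^{12}$, marking off each value hit, beginning with small $x$ and enlarging the range of $x$ as needed until every target is covered. This is precisely your secondary ``cross-check'' strategy.

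Your primary approach---looping over targets $n$, then over $z$, and testing whether $n-101z^2$ is represented by the binary form $x^2+2y^2$ via the class-number-one criterion---is a genuinely different route not taken in the paper. It has the virtue of yielding a deterministic per-target certificate (and, with Cornacchia as you note, an explicit witness triple), at the cost of one factorization per trial; trial division to $10^6$ suffices because any remaining cofactor of a number below $10^{12}$ is necessarily $1$ or a single prime, whose residue mod $8$ settles the matter. The paper's sieve, by contrast, avoids any factoring and any appeal to the theory of $x^2+2y^2$, but needs a bit array of size $\sim 10^{10}$ and some care in bounding how far to push $x$. One minor quibble: the natural density of integers represented by $x^2+2y^2$ is not positive but $\sim c/\sqrt{\log N}$; nonetheless for $N=10^{12}$ this is on the order of $10$--$20\%$, so your expectation that the first successful $z$ is small remains entirely sound.
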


We carried out this computation in C\Rplus\Rplus\@ with GCC 4.8.3 on an Intel i5 PC.
To verify the result, we iterated through triples $x,y,z$ such that $x^2+2y^2\leq10^{12}$, $x^2+2y^2\equiv 98\Pmod{101}$
 and $x^2+2y^2+101z^2\leq10^{12}$.
However, there are simply too many such triples, so we first restricted to small $x$,
 and then interactively increased the $x$'s considered until we got representations of the whole $S_{101,98}\cap[0,10^{12}]$.

\subsection{Number of gaps}\label{sec:exp-ne}

When a form has a squarefree determinant and is anisotropic precisely at an odd prime $p$ and $\infty$, we know by Proposition \ref{prop:prime-almost-univ} that there are only finitely many numbers not represented by the form
 (as always excluding the zero class modulo $p$), i.e., the set of gaps $X_{Q,p}$ is finite.

We investigated the cardinality $\#X_{Q,p}$ for forms of small squarefree determinant $d_Q$ and we observe that it behaves very
 roughly as $p\log p$.
The ratio $\alpha:=\#X_{Q,p}/p\log p$ for forms with $p<300$ and $d_Q<30p$ is shown in Figure~\ref{fig:plogp}.

\begin{figure}
	\centering
	\includegraphics{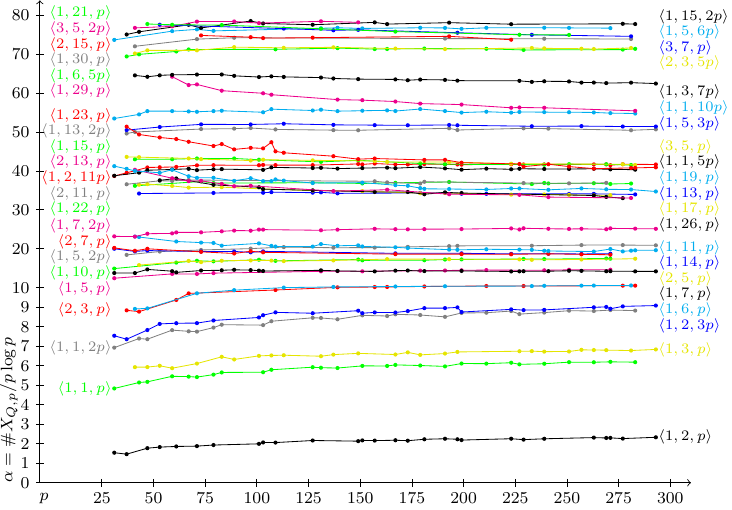}
	\caption{Comparison of a lower estimate of $\#X_{Q,p}$ to $p\log p$ for $30<p<300$ and $d_Q<30p$.
		Note that only forms with $\#X_{Q,p}<100p\log p$ are shown.}
	\label{fig:plogp}
\end{figure}

In the case of non-squarefree determinant, first of all note that without loss of generality
 (as in \S\,\ref{sec:no}) we can consider only forms with squarefree coefficients $a\leq b\leq c$.
Then there are two possibilities:
\begin{itemize}
\item First, if $p^2 \divides d_Q$, the form is $Q=\langle a, wp, vp\rangle$.
 Such a form can represent either quadratic residues or non-residues modulo $p$, and so $X_{Q,p}$ is always infinite.
 However, considering $X'_{Q,p} = X_{Q,p} \cap \{\text{represented classes}\allowbreak\ \text{$\operatorname{mod} p$}\}$,
 it turns out that $X' _{Q,p}$ behaves similarly to $X_{Q,p}$, in particular, the computations showed that $\#X'_{Q,p} > 100 p\log p$.
\item Second, if $p^2 \ndivides d_Q$, then $q^2 \divides d_Q$ for some prime $q\neq p$ and $X_{Q,p}$ may be finite (or infinite), but we got $\alpha>100$ always in these cases.
\end{itemize}
Hence from now on (except for the general Proposition \ref{prop:gaps}) we consider only squarefree determinants.

To estimate $\#X_{Q,p}$ (or $\#X'_{Q,p}$) we actually computed $X_{Q,p}\cap[0,120000p]$.
These computations were performed in C++ with GCC 5.4.0 on a cluster with Intel Xeon CPU cores @\,2.00--2.40\,GHz, and took less than 1~CPU day to complete. We precomputed the list of forms $Q$ anisotropic precisely at $30<p<300$ and with $d_Q<30p$ (or $d_Q<30p^2$) using SageMath~\cite{sagemath}.

In order to give these results a more theoretical basis, let us prove that $\#X_{Q,p}$ is at least of order $p\log p$.

\begin{proposition}\label{prop:gaps}
	Let $a,b,v$ be fixed positive integers such that $ab$ is squarefree. There is a constant $\alpha_0>0$ (depending only on $ab$) such that for all sufficiently large primes $p$, the ternary quadratic form $Q=\la a,b,vp\ra$ has at least $\alpha_0 vp\log p$ gaps, i.e., $\#X_{Q,p}>\alpha_0 vp\log p$.
\end{proposition}

Note that here we are not assuming that the determinant of $Q$ is squarefree, and so  $X_{Q,p}$ may be infinite.

\begin{proof}
	Let us consider the set of integers represented by the binary form $\la a,b\ra$.
	Bernays~\cite{bernays_1912} proved that there is a constant $\gamma_0$ (depending only on the product~$ab$) such that 
	\[
		\# \set{n\leq X}{n=ax^2+by^2} \sim \gamma_0\frac {X}{\sqrt{\log X}}
	\]
	(where the notation $f(X)\sim g(X)$ denotes the fact that $\lim_{X\rightarrow\infty}f(X)/g(X)=1$),
	cf.~\cite{blomer_granville_2006}.
	
	Choose $p$ large enough and let $P:=vp$ and $X:=\beta P\log P$ (where the constant $\beta$ will be specified later). Let us estimate from above the number of positive integers $n\leq X$ that are represented by the ternary form $Q=\la a,b,P\ra=ax^2+by^2+Pz^2$. For any such representation, we have $0\leq z\leq \sqrt{X/P}=\sqrt {\beta\log P}$ (without loss of generality, we assume here that $z\geq 0$) and $ax^2+by^2\leq X-Pz^2\leq X$.
	
	The number of possible values of $ax^2+by^2$ is $< \gamma X\log^{-1/2} X$ by Bernays's theorem (for any $\gamma>\gamma_0$ and sufficiently large $p$), and so the total number of 
	positive integers $n\leq X$ represented by $Q$ is 
	\[
		<\bigl(\sqrt {\beta\log P}+1\bigr) \gamma X\log^{-1/2} X<\sqrt\beta\gamma X+\gamma X\log^{-1/2} X
	.\]
	The second term is of smaller order than the first, and so if $\sqrt\beta\gamma<1$, then $Q$ does not represent a positive proportion of integers $\leq X$.
	
	More precisely, let us estimate the number of gaps $\#X_{Q,p}$ by counting only the gaps $\leq X$. As gaps are, by definition, not divisible by $p$, we just need to subtract the number of multiples of $p$ from the number of non-represented integers to obtain
	\begin{multline*}
		\#X_{Q,p}
		> X-\bigl(\sqrt\beta\gamma X+\gamma X\log^{-1/2} X\bigr) - (X/p+1)
	\\
		= \bigl(1-\sqrt\beta\gamma\bigr)\beta P\log P-(\gamma X\log^{-1/2} X+X/p+1)
	.\end{multline*}
	To maximize the first term, we can choose $\beta=\frac 4{9\gamma^2}$; then $(1-\sqrt\beta\gamma)\beta=\frac 4{27\gamma^2}$. The second term in the parenthesis is of smaller order of magnitude, which finishes the proof for any $\alpha_0<\frac 4{27\gamma_0^2}$.	
\end{proof}

In the argument below, we will need that $\alpha>7$ for almost all ternary forms. This is not possible to obtain by the preceding proof, as $\gamma_0$ is often too large. Nevertheless, the Proposition indicates that $\#X_{Q,p}$ is at least of the correct order, supplementing the numerical data from Figure~\ref{fig:plogp}. For illustration, we list estimates of $\gamma_0$ and $\alpha_0$ for small values of $ab$:
\[\text{\raisebox{\baselineskip}{
\begin{tabular}[t]{ccc}\toprule
{$ab$} & {$\gamma_0$} & {$\alpha_0$} \\\midrule
 \phantom01 &  0.79 &  0.24 \\
 \phantom02 &  0.90 &  0.18 \\
 \phantom03 &  0.66 &  0.34 \\
 \phantom05 &  0.56 &  0.48 \\
 \phantom06 &  0.58 &  0.44 \\
\bottomrule\end{tabular}
\ 
\begin{tabular}[t]{ccc}\toprule
{$ab$} & {$\gamma_0$} & {$\alpha_0$} \\\midrule
 \phantom07 &  0.56 &  0.47 \\
         10 &  0.49 &  0.61 \\
         11 &  0.54 &  0.52 \\
         13 &  0.44 &  0.78 \\
         14 &  0.49 &  0.62 \\
\bottomrule\end{tabular}
\ 
\begin{tabular}[t]{ccc}\toprule
{$ab$} & {$\gamma_0$} & {$\alpha_0$} \\\midrule
         15 &  0.39 &  0.98 \\
         17 &  0.45 &  0.74 \\
         19 &  0.46 &  0.72 \\
         21 &  0.33 &  1.41 \\
         22 &  0.39 &  0.98 \\
\bottomrule\end{tabular}
\ 
\begin{tabular}[t]{ccc}\toprule
{$ab$} & {$\gamma_0$} & {$\alpha_0$} \\\midrule
         23 &  0.45 &  0.73 \\
         26 &  0.43 &  0.83 \\
         29 &  0.40 &  0.93 \\
         30 &  0.31 &  1.55 \\
         31 &  0.41 &  0.88 \\
\bottomrule\end{tabular}
}}\]
(Note that the rate of convergence to Bernays' asymptotics is very slow \cite{blomer_granville_2006}, and so the numbers in the table should be only viewed as approximations for $\gamma_0$ and $\alpha_0$. For example, using the formula for the Landau-Ramanujan constant~\cite{finch_2003} we see that, for $ab=1$, we have $\gamma_0=0.76422\dots$ instead of $0.79$ shown in our table.)
These estimates were computed in Python 2.7.12 on a Intel Xeon CPU core @\,2.00--2.40\,GHz, and took 11~CPU days to complete. We estimate $\gamma$ by computing numbers represented by $\langle1,b\rangle$ up to $10^{11}$.

Further, note that for a form $\la a,b,vp\ra$ we have $\alpha>\alpha_0  v$ by Proposition \ref{prop:gaps}. Thus if $v$ is sufficiently large relative to $\alpha_0$ (which depends only on $ab$), then we indeed have $\alpha>7$.

\subsection{Heuristic argument}\label{sec:heur}

If we now assume that the elements of $X_{Q,p}$ are equidistributed modulo $p$,
 we can use the comparison of $\#X_{Q,p}$ with $p\log p$ in the following heuristic argument:
For a specific form $Q$, denote $\alpha>0$ such constant that $\#X_{Q,p}=\alpha p\log p$ (see Figure~\ref{fig:plogp}).
For $Q$ to be $(p,\ell)$-universal for specific $0<\ell<p$, we need that \emph{none} of the gaps lies in the set $S_{p,\ell}$. 
The probability of this is 
\[
	\Bigl(1-\frac{1}{p-1}\Bigr)^{\#X_{Q,p}}
	= \Bigl(1-\frac{1}{p-1}\Bigr)^{\alpha p\log p}
	\approx e^{-\alpha\frac{p}{p-1}\log p}
	\approx p^{-\alpha}
.\]
Then the expected number of $\ell$'s such that $Q$ is $(p,\ell)$-universal is $(p-1)p^{-\alpha} \approx p^{1-\alpha}$.
This shows that the larger the value of $\alpha$, the smaller the chance that a form is $(p,\ell)$-universal for some $\ell$.

We can even use this to estimate the total expected number of $(p,\ell)$-universal forms:
Oh \cite[Theorem 2.3]{oh_2011_ijnt} proved an upper bound for the determinant of a $(p,\ell)$-universal ternary, which implies $d_Q< Cp^6$ for a constant $C$.
As $p\mid d_Q$, there are at most $Cp^5$ possible determinants.
Thus asymptotically, for each $p$ there are
at most $p^{5+\varepsilon}$ candidates for $(p,\ell)$-universal forms, as the number of ways of factoring $d_Q=abc$ is below $d_Q^\varepsilon$ for any $\varepsilon$.
Therefore, for a fixed $p$ the expected number of $\ell$'s and $Q$'s such that $Q$ is $(p,\ell)$-universal is
asymptotically smaller than $p^{6+\varepsilon-\min\alpha}$.

Let us exclude the forms $\la1,1,p\ra$, $\la1,2,p\ra$, and $\la1,3,p\ra$ for now. The data behind Figure~\ref{fig:plogp} suggest that we eventually have $\alpha>7$ for each of the remaining forms. Thus for given sufficiently large $p$, the expected number of (non-excluded) $(p,\ell)$-universal forms is less than $C'p^{-1-\varepsilon'}$ for some $\varepsilon'>0$ and a constant $C'$. The total expected number over all large primes $p$ is then $C'\sum_p p^{-1-\varepsilon'}$, which \emph{converges}! Thus besides from the excluded  forms $\la1,1,p\ra$, $\la1,2,p\ra$, and $\la1,3,p\ra$, we expect to have only finitely many $(p,\ell)$-universal ones.

Unfortunately, it turns out that $X_{Q,p}$ is not equidistributed modulo $p$. The distribution appears to be not too far from normal (and in fact, seems to be skewed in favor of even fewer universal forms), and so the preceding heuristic computation still provides non-trivial information, especially since most of the values $\alpha$ are much larger than $\alpha>7$ that we needed.

Further, the above consideration shows an important aspect:
 the form $\la 1,2,p\ra$ is by orders more likely to be $(p,\ell)$-universal than any other form.
Thus we performed yet another experiment.

\subsection{\texorpdfstring{$(p,\ell)$}{(p,l)}-universality
 of \texorpdfstring{$\la 1,2,p\ra$}{<1,2,p>}}\label{sec:12p}

In order for $\la 1,2,p\ra$ to be $(p,\ell)$-universal, it has to be anisotropic precisely at $p$ (and $\infty$),
 which happens if and only if $p\equiv 5,7\Pmod8$.
For these primes, we calculated $X_{Q,p}$ (or rather a subset of it, namely $X_{Q,p}\cap[0,120000p]$)
 using the same software as in \S\,\ref{sec:exp-ne}
 and checked whether it contains elements from all classes $\not\equiv0\Pmod p$.
The computation took 290 CPU days to complete.
It turns out that $\la 1,2,p\ra$ is not $(p,\ell)$-universal for any $103\leq p<30000$.
And not only that; we even observe that classes containing small number of elements are extremely rare,
 as can be seen in Figure~\ref{fig:small}.

In Figure~\ref{fig:small}, primes $300<p<30000$ with $p\equiv5,7\Pmod8$ are shown, with each bar corresponding to a group of 100 primes.
For each group and $m<10$, we show (in shades of blue) the number of $(p,\ell)$
 such that our (lower) estimate of $X_{Q,p}\cap S_{p,\ell}$ equals $m$.
For comparison, we show (in gray) the total number of $\ell$'s.
Because there are very few $(p,\ell)$ for which $m\leq 5$, we highlight these (in red and green) in the bottom chart. 
Note that $m\leq1$ never appears for $p\geq103$ and that $m=2$ appears only for $p\leq1181$ and $p=6607$.

\begin{figure}\label{fig 2}
\centering
\includegraphics{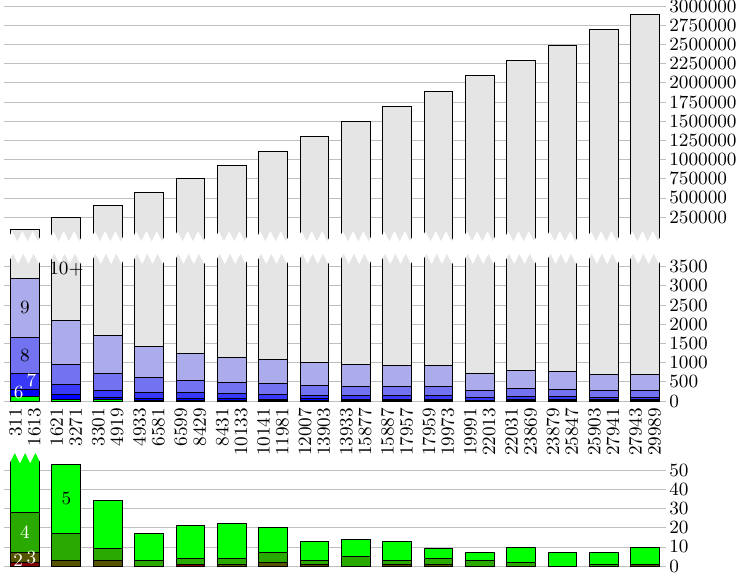}
\caption{Number of $(p,\ell)$'s such that $X_{Q,p}\cap [0,120000p]\cap S_{p,\ell} = m$ for $m=0,1,\dots,9$.
 For details, see \S\,\ref{sec:12p}.}
\label{fig:small}
\end{figure}

Of course, similar computations can be done for other forms with small $\alpha$, such as $\la 1,1,p\ra$ or $\la 1,3,p\ra$.
We tested $\la 1,1,p\ra$ and $\la 1,3,p\ra$ for primes $p<15000$ getting no candidates for $(p,\ell)$-universal forms (other than $p=2,3$). This computation took 210~CPU days to complete.

\subsection{Conclusion of the experiments}

We can summarize our observations from the computations as follows:
\begin{enumerate}
\item there is strong evidence that only finitely many diagonal ternary forms $Q\neq \la 1,2,p \ra, \la 1,1,p \ra$, $\la 1,3,p \ra$
 are $(p,\ell)$-universal (see \S\,\ref{sec:heur});
\item $Q=\la 1,2,p\ra$ is not $(p,\ell)$-universal for any $103<p<30000$, and also
the cases such that the set of gaps $X_{Q,p}\cap S_{p,\ell}$ is small rapidly cease to exist as $p$ increases
 (see \S\,\ref{sec:12p}). Likewise for $\la 1,1,p \ra, \la 1,3,p \ra$ (that are not $(p,\ell)$-universal for $3<p<15000$).
\end{enumerate}

This convinces us that only finitely many $(p,\ell)$-universal diagonal ternary forms exist,
which is the claim of Conjecture~\ref{conj:fin}.
Conjecture~\ref{conj:2-101} is then motivated by the absence of $(p,\ell)$-universal forms for $103\leq p<1257$ (see \S\,\ref{sec:no}) and the non-universality of $\la 1,2,p\ra$ up to $30000$.

\section*{Acknowledgments}

We thank Valentin Blomer and Pavlo Yatsyna for useful discussions that improved the article, and the anonymous referees for several helpful comments.

\bibliographystyle{amsalpha}
\bibliography{biblio}

\end{document}